\newcommand{\p}{\mathbb{P}}
\newcommand{\com}{\mathbb{C}}
\newcommand{\map}{\longrightarrow}
\newtheorem{theorem}[equation]{Theorem}
\newtheorem{lemma}[equation]{Lemma}
\theoremstyle{remark}
\newtheorem{remark}[equation]{Remark}
\newtheorem{ex}[equation]{Example}
\begin{document}

\title{\bf\large\MakeUppercase{Maximum likelihood degree of surjective rational maps}}

\author{Ilya Karzhemanov~\footnote{Laboratory of AGHA, Moscow Institute of Physics and Technology, 9 Institutskiy per., Dolgoprudny,
Moscow Region, 141701, Russia / karzhemanov.iv@mipt.ru.}}
\date{}

\maketitle

\begin{abstract}
With any \emph{surjective rational map} $f: \p^n \dashrightarrow
\p^n$ of the projective space we associate a numerical invariant
(\emph{ML degree}) and compute it in terms of a naturally defined
vector bundle $E_f \map \p^n$.
\end{abstract}

\makeatletter
\renewcommand{\@makefnmark}{}
\makeatother

\footnotetext{{\it MS 2010 classification}: 14E05, 14N10, 14F10.}

\bigskip

With a collection of effective divisors $D_0,\ldots,D_m$ in the
projective space $\p := \com\p^n$ is associated the \emph{maximum
likelihood degree} $(-1)^ne_{\text{top}}(\p\setminus D)$, $D :=
\displaystyle\bigcup_{i=0}^m D_i$. Alternatively, letting
$\Omega^1_{\p}(\log D)$ be the \emph{Saito's sheaf}, i.e. the
\emph{double dual} of the sheaf of logarithmic differential
$1$-forms, one computes the ML degree as the top Chern class
$c_n(\Omega^1_{\p}(\log D))$ (we refer to \cite{catanese-et-al},
\cite{huh-1} for basic properties of ML degree, its connections
with algebraic statistics, topology of arrangements,
combinatorics, etc.). Note however that it is difficult to compute
$c_n(\Omega^1_{\p}(\log D))$ in general (when $D$ is not SNC).

In the present note, we study the ML degree under the condition
that defining polynomials $f_i$ of $D_i$, $0 \le i \le m = n$,
span the linear system of a \emph{surjective rational map} $f: \p
\dashrightarrow \p$ (see \cite{aru} and \cite{ilya-ilya} for some
aspects of such maps). Our main result (proved along the lines
that follow) is the next

\begin{theorem}
\label{theorem:main} In the previous setting, the ML degree
$c_n(\Omega^1_{\p}(\log D))$ is equal to the coefficient of $z^n$
in $\frac{(1 - z\mathcal{O}_{\p}(1))^{n+1}}{\prod_{i=0}^n(1 -
z\mathcal{O}_{\p}(D'_i))}$, where $\bigcup_{i=0}^n D'_i =:
D_{\text{red}}$ is the reduced scheme associated with $D$ (so that
$D = D_{\text{red}}$ as sets).
\end{theorem}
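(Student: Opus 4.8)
Write $h:=\mathcal O_{\p}(1)$ and $D'_i:=(D_i)_{\mathrm{red}}$, so $\bigcup_{i=0}^nD'_i=D_{\mathrm{red}}$, and recall that the ML degree depends only on $D_{\mathrm{red}}$: it equals $(-1)^ne_{\mathrm{top}}(\p\setminus D_{\mathrm{red}})$ and $c_n(\Omega^1_{\p}(\log D_{\mathrm{red}})^{\vee\vee})$ alike. The plan is to transport the computation to a resolution on which the boundary is simple normal crossing (so that $\Omega^1(\log(-))$ is locally free and its Chern class transparent), to use there the pull-back along $f$ of the \emph{target}'s logarithmic cotangent sheaf, and to push the result forward to $\p$.

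First choose a birational morphism $\pi:Y\map\p$ — a composition of blow-ups with smooth centres contained in $D$ — such that $Y$ is smooth, $\widetilde f:=f\circ\pi:Y\map\p$ is a morphism, and $E:=\pi^{-1}(D)_{\mathrm{red}}\cup\mathrm{Exc}(\pi)$ is SNC. Since all centres lie in $D$, $\pi$ restricts to an isomorphism $Y\setminus E\xrightarrow{\ \sim\ }\p\setminus D$, whence
\[
c_n\big(\Omega^1_{\p}(\log D)\big)=(-1)^ne_{\mathrm{top}}(Y\setminus E)=\int_Yc_n\big(\Omega^1_Y(\log E)\big),\qquad c\big(\Omega^1_Y(\log E)\big)=c(\Omega^1_Y)\prod_{E_\alpha\subset E}(1-E_\alpha)^{-1},
\]
the product running over the prime components of $E$ (iterated residue sequences).

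Now bring in $f$ from the target side. On $\p=\com\p^n$ with homogeneous coordinates $y_0,\dots,y_n$ the coordinate divisor $H:=H_0+\dots+H_n$ is SNC and $\Omega^1_{\p}(\log H)\cong\mathcal O_{\p}^{\oplus n}$, a global frame being $d\log(y_i/y_0)$, $1\le i\le n$. As $\widetilde f$ is a morphism with $\widetilde f^{-1}(H)_{\mathrm{red}}\subseteq E$ and $E$ is SNC, functoriality of logarithmic differentials yields $\psi:\mathcal O_Y^{\oplus n}=\widetilde f^{\,*}\Omega^1_{\p}(\log H)\to\Omega^1_Y(\log E)$, $e_i\mapsto d\log(\widetilde f_i/\widetilde f_0)$, where $\widetilde f=[\widetilde f_0:\dots:\widetilde f_n]$. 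Dominance of $f$ makes $\psi$ an isomorphism at the generic point, hence injective, so $c\big(\Omega^1_Y(\log E)\big)=c(\mathcal O_Y^{\oplus n})\,c(\mathrm{coker}\,\psi)=c(\mathrm{coker}\,\psi)$, and $\mathrm{coker}\,\psi$ is a torsion sheaf on the logarithmic ramification divisor $R=\{\det\psi=0\}\in|K_Y+E|$. Off $\mathrm{Exc}(\pi)$ this $\psi$ is the pull-back of the homomorphism $\overline\psi:\mathcal O_{\p}^{\oplus n}\to\Omega^1_{\p}(\log D)^{\vee\vee}$ obtained on $\p$ itself by sending $e_i\mapsto d\log(f_i/f_0)$ and extending across $\mathrm{Bs}(f)$ (codimension $\ge2$, so the extension into the reflexive target is automatic); thus the ML degree equals $\int_{\p}c_n(\mathrm{coker}\,\overline\psi)$, and it remains to identify this with $[z^n]$ of $(1-zh)^{n+1}/\prod_{i=0}^n(1-z\,\mathcal O_{\p}(D'_i))$.

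This last identification is where the hypothesis does its work, and where the difficulty lies. The sheaf $\mathrm{coker}\,\overline\psi$ differs from the ``expected'' residue contribution only over the non-SNC locus of $D_{\mathrm{red}}$ together with $\mathrm{Bs}(f)$, a closed set of codimension $\ge2$; one analyses it there on $Y$, where the explicit normal form $\psi(e_i)=d\log(\widetilde f_i/\widetilde f_0)$ with each $\widetilde f_i$ a monomial in local equations of the components of $E$ makes both $\mathrm{coker}\,\psi$ and $c(\Omega^1_Y)\prod(1-E_\alpha)^{-1}$ completely explicit. Because the $f_i$ span the linear system of a \emph{surjective} rational map, the divisors $\widetilde D_i:=\widetilde f^{\,*}H_i$ are mutually linearly equivalent and $\widetilde f$ is generically finite; this forces the exponent matrices recording how the exceptional divisors of $\pi$ meet $\widetilde D_0,\dots,\widetilde D_n$ to have exactly the rank needed for the exceptional and discrepancy contributions to cancel upon applying $\pi_{*}$, so that $\pi_{*}c_n(\Omega^1_Y(\log E))$ reduces to the asserted coefficient on $\p$. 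Carrying out this rank-and-cancellation analysis — cleanest by induction on the blow-ups composing $\pi$, verifying that a single blow-up changes the two sides equally — is the main obstacle; the steps preceding it are formal.
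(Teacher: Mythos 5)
Your argument is a strategy, not a proof: the step you yourself flag as ``the main obstacle'' --- showing that the integral $\int_Y c_n(\Omega^1_Y(\log E))$, computed from the normal forms $d\log(\widetilde f_i/\widetilde f_0)$ on a log resolution, collapses to the coefficient of $z^n$ in $(1-z\mathcal O_{\p}(1))^{n+1}/\prod_{i}(1-z\mathcal O_{\p}(D'_i))$ --- is exactly where the entire content of the theorem lives, and you do not carry it out. No mechanism is exhibited for the cancellation of the exceptional and discrepancy terms; the sentence about the exponent matrices having ``exactly the rank needed'' asserts the conclusion rather than deriving it, and surjectivity of $f$ enters your text only through the weaker statement that $\widetilde f$ is generically finite, which cannot suffice (generic finiteness alone does not pin down how $\widetilde f^{\,*}H_i$ meets the exceptional locus). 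A secondary gap: the theorem is a statement about $c_n$ of the reflexive sheaf $\Omega^1_{\p}(\log D)$ on $\p$ itself, and your opening chain $c_n(\Omega^1_{\p}(\log D))=(-1)^n e_{\mathrm{top}}(\p\setminus D)=\int_Y c_n(\Omega^1_Y(\log E))$ is a theorem only when the boundary is SNC (or GNC as in \cite{catanese-et-al}); for general $D$ the first equality is comparable in difficulty to what is being proved, so the ``formal'' preamble already begs part of the question.

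The paper's route is different and never leaves $\p$: it pulls back the residue sequence $0\to\Omega^1_{\p}\to\Omega^1_{\p}(\log H)\to\mathcal O_H\to 0$ of the coordinate arrangement along $f$ (reflexive pullback across the codimension $\ge 2$ base locus), proves $f^*\Omega^1_{\p}(\log H)=\Omega^1_{\p}(\log D)$ by matching local generators $\sum c_i\,d\log x_i\mapsto\sum c_i\,d\log f_i$, and identifies $f^*\Omega^1_{\p}=\Omega^1_{\p}\otimes\mathcal O_{\p}(-d_f+1)$ via an auxiliary bundle $E_f$ built from the hyperplanes $H_p\subset V=\langle f_0,\dots,f_n\rangle$ (this is where surjectivity is used) together with Hwang's characterization of $T_{\p}$. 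These two lemmas produce an exact sequence $0\to\Omega^1_{\p}\to\Omega^1_{\p}(\log D)\to(\text{residues along }D_{\mathrm{red}})\to 0$ on $\p$, from which the Chern class formula is immediate. If you wish to rescue your resolution-and-pushforward approach, you would need an input playing the role of these lemmas; as written the proposal reduces the theorem to an unproved combinatorial identity.
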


For a vector bundle $E$ over $\p$, given by an affine open cover
$\p = \displaystyle\cup_{\alpha}\,U_{\alpha}$ and transition
functions $g_{\alpha\beta}: U_{\alpha} \cap U_{\beta} \map
\text{GL}(r,\com)$, the \emph{pullback} $f^*(E)$ on
$\p\setminus{\{\Sigma :=\ \text{base locus of}\, f\}}$ is defined
as usual (due to the surjectivity of $f$), via
$f^{-1}(U_{\alpha})$ and $f^*(g_{\alpha\beta})$. Note that all
$f^{-1}(U_{\alpha})$ are affine open in $\p$. Let
$\displaystyle\cup_{k}\,U_{\alpha,k}$ be an affine open cover of
$f^{-1}(U_{\alpha})$ such that $\p =
\displaystyle\cup_{\alpha,k}\,U_{\alpha,k}$. Then, since
$\text{codim}\,\Sigma > 1$ and $f^*(g_{\alpha\beta})$ are
\emph{algebraic}, every $f^*(g_{\alpha\beta})$ extends through
$U_{\alpha,k} \cap U_{\beta,m} \cap \Sigma$ to each $U_{\alpha,k}
\cap U_{\beta,m}$. Furthermore, the $1$-cocyle property of
$f^*(g_{\alpha\beta})$ (considered on
$(\displaystyle\cup_{k}\,U_{\alpha,k}) \cap
(\displaystyle\cup_{m}\,U_{\beta,m}) \supseteq f^{-1}(U_{\alpha})
\cap f^{-1}(U_{\beta})$) is preserved and one gets a vector
bundle, over $\p$, which we denote again by $f^*(E)$.

Further, let $x_0,\ldots,x_n$ be projective coordinates on $\p$
such that $f^*(x_i) = f_i$. Denote by $H$ the union of coordinate
hyperplanes $H_i := (x_i = 0) \subset \p$. There is an exact
sequence
\begin{equation}
\label{eq-1} 0 \map \Omega^1_{\p} \stackrel{\psi_H}{\map}
\Omega^1_{\p}(\log H) \stackrel{\varphi_H}{\map} \bigoplus_{i =
0}^n \mathcal{O}_{H_i} \map 0
\end{equation}
(see e.g. \cite[Lemma 2]{catanese-et-al}). We have
$f^*(\mathcal{O}_{\p}) = \mathcal{O}_{\p}$ and
$f^*(\mathcal{O}_{\p}(H)) = \mathcal{O}_{\p}(D)$ by construction.
Then \eqref{eq-1} pulls back to an exact sequence
\begin{equation}
\label{eq-1.5} f^*(\Omega^1_{\p}) \stackrel{\psi_D}{\map}
f^*(\Omega^1_{\p}(\log H)) \stackrel{\varphi_D}{\map}
f^*(\bigoplus_{i = 0}^n \mathcal{O}_{H_i}) = \bigoplus_{i = 0}^n
\mathcal{O}_{D_i}.
\end{equation}
Note however that the morphism $\psi_D := f^*(\psi_H)$ (resp.
$\varphi_D := f^*(\varphi_H)$) need not be injective (resp.
surjective) --- see below.

\begin{lemma}
\label{f-star-o-log} $f^*(\Omega^1_{\p}(\log H)) =
\Omega^1_{\p}(\log D)$.
\end{lemma}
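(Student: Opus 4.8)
The plan is to carry out everything over the complement $\p\setminus\Sigma$ of the base locus, where $f$ is a genuine morphism, and then pass to all of $\p$ by reflexivity. Note that $f^*(\Omega^1_{\p}(\log H))$ is a vector bundle, hence reflexive, while $\Omega^1_{\p}(\log D)$ is reflexive by construction, being a double dual; since $\mathrm{codim}\,\Sigma>1$ and formation of the double dual commutes with restriction to open sets, it suffices to produce a morphism between the two sheaves over $\p\setminus\Sigma$ and to check that it is an isomorphism off a closed subset of codimension $\geq 2$ — for instance off the union of $\Sigma$, the singular locus of $D_{\mathrm{red}}$, and the pairwise intersections of the components $D'_i$.

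First I would build the comparison map. Locally on the target, $\Omega^1_{\p}(\log H)$ is generated over $\mathcal{O}_{\p}$ by $\Omega^1_{\p}$ together with the forms $\tfrac{dx_i}{x_i}$, so $f^*(\Omega^1_{\p}(\log H))$ is generated by $\mathrm{im}(\psi_D)$ and the classes of $f^*\bigl(\tfrac{dx_i}{x_i}\bigr)$. One sends $\psi_D(f^*\beta)$ to the ordinary pullback $f^*\beta\in\Omega^1_{\p}\subset\Omega^1_{\p}(\log D)$ and sends $f^*\bigl(\tfrac{dx_i}{x_i}\bigr)$ to $\tfrac{df_i}{f_i}$; the latter does lie in $\Omega^1_{\p}(\log D)$, because $\tfrac{df_i}{f_i}=\sum_j \mathrm{mult}_{D'_j}(f_i)\,\tfrac{dg_j}{g_j}$ for $g_j$ the reduced local equations of the components $D'_j$ of $D_{\mathrm{red}}$, hence it has at worst logarithmic poles along $D_{\mathrm{red}}$. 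Well-definedness of the resulting $\mathcal{O}_{\p}$-linear map $\lambda\colon f^*(\Omega^1_{\p}(\log H))\to\Omega^1_{\p}(\log D)$ over $\p\setminus\Sigma$ is checked on the defining relations of $\Omega^1_{\p}(\log H)$, the essential one being $x_i\cdot\tfrac{dx_i}{x_i}=dx_i$, which $\lambda$ carries to $f_i\cdot\tfrac{df_i}{f_i}=df_i=f^*(dx_i)$. One also notes that $\lambda$ is injective, since its source is torsion-free and $\lambda$ is generically an isomorphism.

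Next I would show $\lambda$ is an isomorphism by a local analysis at the codimension-$\leq 1$ points surviving the first reduction. Off $D_{\mathrm{red}}$ both sheaves equal $\Omega^1_{\p}$ and $\lambda$ is the tautological comparison of pullbacks, an isomorphism along the étale locus of $f$; along the remaining codimension-$1$ strata one uses the extra generators $\tfrac{df_i}{f_i}$ together with the observation, read off from \eqref{eq-1.5}, that the saturation of $\mathrm{im}(\psi_D)$ inside $f^*(\Omega^1_{\p}(\log H))$ is the honest cotangent sheaf $\Omega^1_{\p}$ — not $f^*\Omega^1_{\p}$ — so that one obtains a short exact sequence $0\to\Omega^1_{\p}\to f^*(\Omega^1_{\p}(\log H))\to\bigoplus_i\mathcal{O}_{D'_i}\to 0$ parallel to the sequence defining $\Omega^1_{\p}(\log D)$. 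At a generic point of a component $D'_{i_0}$, writing $f_{i_0}=g^{e}u$ with $g$ its reduced equation, $u$ a unit and $e$ the multiplicity, and choosing local coordinates in which $f$ is a cyclic cover branched along $D'_{i_0}$, one gets $f^*\bigl(\tfrac{dx_{i_0}}{x_{i_0}}\bigr)=e\,\tfrac{dg}{g}+(\text{regular})$; since $e$ is a unit this maps onto the residue summand $\mathcal{O}_{D'_{i_0}}$, and together with the regular directions this exhibits $\lambda$ as an isomorphism there. Hence $\lambda$ is an isomorphism outside codimension $\geq 2$, and by the first step it is an isomorphism on all of $\p$.

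The main obstacle is precisely the middle point above: the pulled-back square \eqref{eq-1.5} is not short exact — as already warned, $\psi_D$ need not be injective and $\varphi_D$ need not be surjective — because $f$ is not flat and the $D_i$ carry non-reduced structure, so the quotient $f^*\mathcal{O}_H=\mathcal{O}_D$ is non-reduced whereas $\Omega^1_{\p}(\log D)/\Omega^1_{\p}$ sees only the reduced components $D'_i$. The real content of the lemma is that passing to the saturation of $\mathrm{im}(\psi_D)$ repairs this, and controlling the behaviour over the ramification locus of $f$ and over $\Sigma$; this is exactly where the codimension-$\geq 2$ reflexivity reduction of the first step is indispensable, since neither the saturation step nor the identification of residues can be expected to behave well along those loci.
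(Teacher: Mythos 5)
Your strategy (build the natural comparison map $\lambda$ over $\p\setminus\Sigma$, check it is an isomorphism in codimension one, extend by reflexivity) is the right way to try to make the statement precise, and your residue computation at a generic point of a component $D'_{i_0}$ is fine. But there is a genuine gap at the step ``off $D_{\mathrm{red}}$ both sheaves equal $\Omega^1_{\p}$ \ldots\ an isomorphism along the \'etale locus of $f$''. The source is not $\Omega^1_{\p}$ there: $\Omega^1_{\p}(\log H)$ is the \emph{trivial} bundle $\mathcal{O}_{\p}^{\oplus n}$, globally framed by $d\log(x_1/x_0),\ldots,d\log(x_n/x_0)$, so $f^*(\Omega^1_{\p}(\log H))$ is trivial as well, and $\lambda$ sends this frame to $d\log(f_1/f_0),\ldots,d\log(f_n/f_0)$, which become linearly dependent exactly where the differential of $f$ drops rank. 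Hence $\lambda$ degenerates along the entire ramification divisor $R$ of $f$, a codimension-one locus which in general is \emph{not} contained in $D$; $R$ does not appear in your list of excluded loci and cannot be absorbed by the codimension-$\geq 2$ reflexivity reduction. The gap cannot be repaired, because the two sheaves are simply not isomorphic in general: the left-hand side has $c_1=0$, while $c_1(\Omega^1_{\p}(\log D))=(\deg D_{\mathrm{red}}-n-1)\,c_1(\mathcal{O}_{\p}(1))$. Already for $n=1$ and $f=[x_0^2:x_1(x_0+x_1)]$ one gets $f^*(\Omega^1_{\p}(\log H))=\mathcal{O}$ versus $\Omega^1_{\p}(\log D)=\mathcal{O}(1)$, the cokernel of $\lambda$ being a skyscraper at the ramification point $2x_1+x_0=0$. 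For the same $c_1$ reason your intermediate sequence $0\to\Omega^1_{\p}\to f^*(\Omega^1_{\p}(\log H))\to\bigoplus_i\mathcal{O}_{D'_i}\to 0$ cannot exist; note also that the saturation of $\mathrm{im}(\psi_D)$ in $f^*(\Omega^1_{\p}(\log H))$ is the whole sheaf, since the relevant residue quotient is torsion.

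To be clear, this is not an artifact of your particular route: the paper's own proof makes the same jump, asserting that the constant-coefficient combinations $\sum c_i\,d\log f^*(x_i)$ generate $\Omega^1_{\p}(\log D)$, which fails precisely where the Jacobian of $f$ vanishes outside $D$. (It does hold for the Frobenius map of Example~\ref{ex:comp}, whose ramification is contained in $H=D_{\mathrm{red}}$ --- which is why that example does not detect the problem.) What your $\lambda$ genuinely produces is an injection $f^*(\Omega^1_{\p}(\log H))\hookrightarrow\Omega^1_{\p}(\log D)$ with cokernel supported on $R$; any correct argument for Theorem~\ref{theorem:main} has to account for that cokernel rather than declare the map an isomorphism.
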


\begin{proof}[Proof]
The bundle $\Omega^1_{\p}(\log H)$ (resp. $\Omega^1_{\p}(\log D)$)
is trivial over an affine open set not containing $H$ (resp. $D$).
Hence, as $f^*(\mathcal{O}_{\p}) = \mathcal{O}_{\p}$, it suffices
to restrict to an affine open $U \subset \p^n$ (resp. $f^{-1}(U)$)
such that $U \cap H \ne \emptyset$ (we may also assume that $x_0
\ne 0$ on $U$). Then $\Omega^1_{\p}(\log H)\big\vert_U$ is
generated by the local sections $\displaystyle\sum_{i = 1}^n
c_i\log x_i$, $c_i \in \com$, whereas $f^*(\Omega^1_{\p}(\log
H))\big\vert_{f^{-1}(U)}$ is generated by $\displaystyle\sum_{i =
1}^n c_i\log f^*(x_i)$ (as usual we take double duals when
needed). This yields $f^*(\Omega^1_{\p}(\log
H))\big\vert_{f^{-1}(U)} = \Omega^1_{\p}(\log
D)\big\vert_{f^{-1}(U)}$ and the result follows.
\end{proof}

Before finding $f^*(\Omega^1_{\p})$ we need an auxiliary
construction. Namely, put $d_f := \deg f_i$ and consider the
subspace $V \subset H^0(\p,\mathcal{O}_{\p}(d_f))$ spanned by
$f_0,\ldots,f_n$. Recall that by Kodaira's construction of
rational maps via linear systems, every point $p \in
f(\p\setminus{\Sigma})$ is represented by hyperplane $H_p \subset
V$, which consists of all polynomials from $V$ vanishing at
$f^{-1}(p)$. Then, since $f$ is surjective, this defines a
\emph{vector bundle} $E_f \map \p = f(\p\setminus{\Sigma})$, with
fibers $E_{f,p} = H_p$ for all $p$, and an exact sequence
\begin{equation}
\label{eq-2} 0 \map \mathcal{L} \map \com^{n+1} \map E_f \map 0
\end{equation}
for some line bundle $\mathcal{L}$. It is easy to prove (by
induction on $n$) that $\mathcal{L} = \mathcal{O}_{\p}(-n-1)$.
This implies that both $E_f$ and $f^*(E_f)$ are generated by
global sections.

We now prove the following (``Hurwitz-type''):

\begin{lemma}
\label{f-star-o-1} $f^*(\Omega^1_{\p}) \subseteq
\Omega^1_{\p}\otimes_{\mathcal{O}_{\p}}\mathcal{O}_{\p}(-d_f +
1)$.
\end{lemma}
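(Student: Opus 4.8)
The plan is to obtain $f^*(\Omega^1_{\p})$ by dualizing the formula for $f^*(E_f)$ recorded just above the lemma, using the identification $E_f \simeq T_{\p}$ coming from \eqref{eq-2} and \cite[Theorem 3.1]{hwang}. So the argument is essentially a one-line twist computation together with some bookkeeping at the base locus.

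First I would simplify the $\mathcal{H}om$-sheaf in the description of $f^*(E_f)$. Since $\mathcal{H}om_{\mathcal{O}_{\p}}(\mathcal{O}_{\p}(1),-)$ is just twisting by $\mathcal{O}_{\p}(-1)$, the established equality $f^*(E_f) = \mathcal{H}om_{\mathcal{O}_{\p}}(\mathcal{O}_{\p}(1),E_f\otimes\mathcal{O}_{\p}(d_f))$ becomes $f^*(E_f) = E_f\otimes\mathcal{O}_{\p}(d_f-1)$. Combining with $E_f \simeq T_{\p}$ yields $f^*(T_{\p}) \simeq T_{\p}\otimes\mathcal{O}_{\p}(d_f-1)$.

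Next I would dualize. On $\p\setminus\Sigma$ the rational map $f$ is an honest morphism, so there $f^*$ is exact and commutes with $(-)^{\vee}$ on locally free sheaves; hence $f^*(\Omega^1_{\p})|_{\p\setminus\Sigma} = \big(f^*(T_{\p})\big)^{\vee}|_{\p\setminus\Sigma} = \big(T_{\p}\otimes\mathcal{O}_{\p}(d_f-1)\big)^{\vee}|_{\p\setminus\Sigma} = \big(\Omega^1_{\p}\otimes\mathcal{O}_{\p}(-d_f+1)\big)|_{\p\setminus\Sigma}$. Because $\mathrm{codim}\,\Sigma>1$ and both $f^*(\Omega^1_{\p})$ (via its defining extension) and $\Omega^1_{\p}\otimes\mathcal{O}_{\p}(-d_f+1)$ are vector bundles, hence reflexive and recovered from their restrictions to $\p\setminus\Sigma$, this isomorphism propagates to all of $\p$, giving the claim.

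The one point that needs genuine care — the main obstacle — is precisely this bookkeeping around $\Sigma$: one must be sure that the functor used throughout (pullback on $\p\setminus\Sigma$ followed by reflexive extension) commutes both with dualization and with the isomorphism $E_f\simeq T_{\p}$. This is where $\mathrm{codim}\,\Sigma>1$ is indispensable, together with the fact that every sheaf involved is locally free; I would state explicitly the lemma that a vector bundle on $\p$ is determined by its restriction to the complement of a closed subset of codimension $\ge 2$, so that an identity of bundles checked on $\p\setminus\Sigma$ holds on $\p$. Granting that, the rest is the twist computation above.
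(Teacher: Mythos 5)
Your proposal is correct and follows essentially the same route as the paper: the paper's own ``proof'' is just the remark ``Thus we obtain'' after establishing $f^*(E_f) = \mathcal{H}om_{\mathcal{O}_{\p}}(\mathcal{O}_{\p}(1),E_f\otimes\mathcal{O}_{\p}(d_f)) = E_f\otimes\mathcal{O}_{\p}(d_f-1)$ and $E_f \simeq T_{\p}$, i.e.\ exactly your twist-and-dualize computation. Your explicit bookkeeping at $\Sigma$ (dualizing on $\p\setminus\Sigma$ and extending by reflexivity across a codimension $\ge 2$ locus) only spells out what the paper's definition of $f^*$ already builds in.
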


\begin{proof}
Each global section of $f^*(E_f)$ is given by some choice of a
basis ($=\{x_0,\ldots,x_n\}$) in $\com^{n+1}$ and a way every $p
\in \p$ (identified with $\sum p_i x_i$ for $p_i \in \com$) is
represented by a point in $V \simeq \com^{n+1} = H^0(\p,E_f)$.
This yields a \emph{surjection}
$$
\mathcal{H}om_{\mathcal{O}_{\p}}(\mathcal{O}_{\p}(1), E_f
\otimes_{\mathcal{O}_{\p}} \mathcal{O}_{\p}(d_f)) = E_f
\otimes_{\mathcal{O}_{\p}} \mathcal{O}_{\p}(d_f - 1)
\twoheadrightarrow f^*(E_f)
$$
of vector bundles generated by global sections.

Now observe that $E_f \simeq T_{\p}\ (= \text{the dual of}\
\Omega^1_{\p})$ by \eqref{eq-2} and \cite[Theorem 3.1]{hwang}.
Hence $f^*(\Omega^1_{\p})$ embeds into
$\Omega^1_{\p}\otimes_{\mathcal{O}_{\p}}\mathcal{O}_{\p}(-d_f +
1)$ by duality.
\end{proof}

Note that $\Omega^1_{\p}(\log D) = \Omega^1_{\p}(\log
D_{\text{red}})$ (cf. the proof of Lemma~\ref{f-star-o-log}).
Hence $\varphi_D(\Omega^1_{\p}(\log D)) = \bigoplus_{i = 0}^n
\mathcal{O}_{D'_i}$. Further, it follows from \eqref{eq-1.5} and
Lemma~\ref{f-star-o-1} that the kernel of $\varphi_D$ is a
subsheaf of $\psi_D(\Omega^1_{\p} \otimes \mathcal{O}_{\p}(-d_f +
1))$, whose general local section is easily seen (by restricting
on $\p\setminus\Sigma$) to coincide with a holomorphic $1$-form,
which vanishes \emph{at most} on $D_{\text{red}}$. One actually
finds that this is a \emph{subbundle} of $\Omega^1_{\p}$ generated
by all such $1$-forms. Thus we get $\text{Ker}\,\varphi_D =
\Omega^1_{\p}$ and an exact sequence
$$
0 \map \Omega^1_{\p} \map \Omega^1_{\p}(\log D) \map \bigoplus_{i
= 0}^n \mathcal{O}_{D'_i} \map 0.
$$
Taking the total Chern class of the latter concludes the proof of
Theorem~\ref{theorem:main}.

\begin{remark}
\label{remark:moduli-etc} We summarize that any $f$ defines,
\emph{canonically}, a fiberwise non-degenerate element $e \in
\text{Hom}\,(\p; T_{\p}, T_{\p} \otimes \mathcal{O}_{\p}(d_f -
1))$. This can also be seen as follows. Namely, the embedding
$\mathcal{L} \subset \com^{n+1}$ in \eqref{eq-2} is given by some
global sections $s_0,\ldots,s_n \in H^0(\p,\mathcal{O}_{\p}(n +
1))$, so that $x_i \mapsto s_i$, $0 \le i \le n$, defines a
\emph{regular surjective} self-map of $\p$. This yields a family
(a ``field'') $\{H_p\}$ of hyperlines on $\p \ni p$. After
choosing $e$, one gets another family $\{H'_p\}$, where $H'_p
\simeq H_p$ are spaces of forms of degree $d_f$ and
$\displaystyle\bigcup H'_p = V$. Identify $H'_p$ with the set of
corresponding hypersurfaces that vanish at $p$. The map $f$ is now
obtained by sending each $p \in H'_p$ to $H_p$ (it is defined
exactly on $\p\setminus\displaystyle\bigcap H'_p$). One thus
obtains a description of the moduli spaces of surjective maps $f$.
It would be interesting to relate this picture with \cite{don},
where the moduli of degree $k$ rational self-maps of $\p^1$ were
interpreted as the moduli of (pairs of) \emph{monopoles}, having
magnetic charge $k$.
\end{remark}

\begin{ex}
\label{ex:comp} The need for $D_{\text{red}}$ in
Theorem~\ref{theorem:main} is justified by the \emph{Frobenius
map} $f$, given by $f_i := x_i^{d_f}$, $0 \le i \le n$; ML degree
of $f$ equals $(-1)^ne_{\text{top}}((\com^*)^n) = {\bf 0}$ in this
case. Further, one computes the ML degree of $f$ in \cite[Example
1.6]{ilya-ilya} to be ${\bf 9}$, which can also be seen directly
from \cite[Corollary 6]{catanese-et-al} (here the divisors $D_i$
satisfy the \emph{GNC condition}). Indeed, in this case $D_i$ are
reduced and $\deg f_i = 2$ for all $i$, so that the expression
with Chern classes from Theorem~\ref{theorem:main} becomes
$$
\frac{(1 - z\mathcal{O}_{\p}(1))^3}{(1 - z\mathcal{O}_{\p}(2))^3}
= (1 - z\mathcal{O}_{\p}(1))^3(1 + z\mathcal{O}_{\p}(2) + 4z^2)^3
=
$$
$$
= (1 + z\mathcal{O}_{\p}(1) + 2z^2)^3 = 1 + 3(z\mathcal{O}_{\p}(1)
+ 2z^2) + 3(z\mathcal{O}_{\p}(1) + 2z^2)^2 = 1 +
z\mathcal{O}_{\p}(3) + {\bf 9}z^2.
$$
\end{ex}

\bigskip

\thanks{{\bf Acknowledgments.}
I am grateful to anonymous referee for helpful comments and
corrections.

\end{document}